\documentclass[letterpaper,conference]{ieeeconf}
\IEEEoverridecommandlockouts                              
\usepackage[utf8]{inputenc}
\usepackage{textcomp}

\usepackage{graphicx}
\usepackage{amsmath}
\usepackage[version=4]{mhchem}
\usepackage{siunitx}
\usepackage{graphicx}
\usepackage{longtable,tabularx}

\let\OriginalLabelIndent\labelindent
\let\labelindent\relax

\usepackage{amsmath} 
\usepackage{amsfonts}
\usepackage{amsthm}

\theoremstyle{plain}
\newtheorem{theorem}{Theorem}[section]
\newtheorem{proposition}{Proposition}[section]
\newtheorem{corollary}{Corollary}[section]

\theoremstyle{definition}
\newtheorem{assumption}{Assumption}[section]

\newtheorem{problem}{Problem}[section]

\newtheorem{remark}{Remark}[section]

\usepackage{float} 
\usepackage[skip=0em,font=footnotesize]{caption}

\usepackage[margin = 1in]{geometry}

\usepackage{tikz}
\usetikzlibrary{shapes,arrows,calc,positioning}
\tikzstyle{bigblock} = [draw, fill=blue!20, rectangle, 
    minimum height=6em, minimum width=8em]
\tikzstyle{medblock} = [draw, fill=blue!20, rectangle, 
    minimum height=4em, minimum width=4em]    
\tikzstyle{mux} = [draw, fill=black!20, rectangle, 
    minimum height=5em, minimum width=0.1em]    
\tikzstyle{smallblock} = [draw, fill=blue!20, rectangle, 
    minimum height=2em, minimum width=3em]
    
\tikzstyle{data_block} = [draw, fill=green!20, rectangle, 
    minimum height=2em, minimum width=3em]
\tikzstyle{ops_block} = [draw, fill=blue!20, rectangle, 
    minimum height=2em, minimum width=3em]    
\tikzstyle{est_block} = [draw, fill=red!20, rectangle, 
    minimum height=2em, minimum width=3em]    
    
\tikzstyle{sum} = [draw, fill=blue!20, circle, node distance=1cm,minimum height=0.5cm]
\tikzstyle{signal} = [coordinate]
\tikzstyle{pinstyle} = [pin edge={to-,thin,black}]
\tikzstyle{block} = [draw, fill=blue!20, rectangle, 
    minimum height=3em, minimum width=9em]
\tikzstyle{blockS} = [draw, fill=blue!20, rectangle, 
    minimum height=3em, minimum width=4em]    
\tikzstyle{input} = [coordinate]
\tikzstyle{output} = [coordinate]
\usetikzlibrary{matrix}

\usetikzlibrary{positioning}
\usetikzlibrary{math} 

\usetikzlibrary{arrows.meta}

\usepackage{hyperref}
\usepackage{xcolor}
\hypersetup{
    colorlinks,
    linkcolor={blue!100!black},
    citecolor={blue!50!black},
    urlcolor={blue!80!black}
}

\newcommand{\bc}{\begin{center}}
\newcommand{\ec}{\end{center}}
\newcommand{\benum}{\begin{enumerate}}
\newcommand{\eenum}{\end{enumerate}}

\newcommand{\matl}{\left[ \begin{array}}
\newcommand{\matr}{\end{array} \right]}

\renewcommand{\matl}{\begin{bmatrix}}
\renewcommand{\matr}{\end{bmatrix}}

\newcommand{\matls}{\left[ \begin{smallmatrix}}
\newcommand{\matrs}{\end{smallmatrix} \right]}
\newcommand{\isdef}{\stackrel{\triangle}{=}}

\newcommand{\rmT}{{\rm T}}

\newcommand{\rmc}{{\rm c}}
\newcommand{\rmd}{{\rm d}}

\newcommand{\rmf}{{\rm f}}

\newcommand{\rmp}{{\rm p}}

\newcommand{\rmv}{{\rm v}}

\newcommand{\BBR}{{\mathbb R}}

\newcommand{\SP}{{\mathcal P}}

\newcommand{\ST}{{\mathcal T}}

\usepackage{enumitem,amssymb}
\newlist{todolist}{itemize}{2}
\setlist[todolist]{label=$\square$}
\usepackage{pifont}

\title{Finite-Time Safe-Set Recovery for Systems with Parametric Uncertainty}

\title{Reach-Stabilize Control of Control-Affine Systems \\ with Unknown Affine Parameters}

\author{
Alexander Dorsey \and Kunal Garg
\and 
Ankit Goel
\thanks{Alexander Dorsey and Ankit Goel are with the Department of Mechanical Engineering, University of Maryland, Baltimore County,1000 Hilltop Circle, Baltimore, MD 21250. {\tt\small adorsey4@umbc.edu, ankgoel@umbc.edu }}%
\thanks{Kunal Garg is with the School for Engineering of Matter, Transport, and Energy at Arizona State University, Tempe, AZ, 85281. Email: \texttt{kgarg24@asu.edu}.}
}

\begin{document}

\maketitle

\begin{abstract}
This paper considers the reach-stabilize problem for a class of nonlinear control-affine systems with unknown parametric uncertainties, where the system states must remain in a safe set at all times, or enter a safe set in finite time and then remain in it, and converge to a goal point.
An estimator is designed that generates a parameter estimate and a computable, nonincreasing bound on the estimation error from a known initial error bound, without requiring persistence of excitation.
The bound defines margins that are added to a control barrier and a control Lyapunov function condition, which are then enforced in a quadratic program for efficient control design.
It is shown that, for the closed-loop system, the safe set is forward invariant when the system starts within the safe set and is finite-time reachable from outside the safe set with a recovery-time bound that does not depend on the unknown parameter, and that the goal point is exponentially stable.
In two numerical examples, with the initial state outside and inside the safe set, the proposed control policy, a baseline \textit{oracle} control policy that uses the true parameter, both recover or remain in the safe set and converge to the goal point, while another baseline control policy that uses a fixed, incorrect, parameter does not remain in the safe set and does not converge to the goal point.
\end{abstract}

\section{Introduction}

Safety-critical systems must converge to a goal while remaining safe.
The \textit{reach-avoid problem} requires the state to reach a target set while avoiding an unsafe region \cite{fisac2015reachavoid,margellos2011hamiltonjacobi}.
The \textit{stabilize-avoid problem} \cite{so2023solving} requires the state to converge to the goal and remain near it while avoiding the unsafe set, and is formulated for initial states in the safe set.
The \textit{reach-stabilize problem} considered here allows the initial state to lie outside the safe set.
The state must reach the safe set in finite time, remain in the set thereafter, and converge to a goal point.
In reach-avoid and stabilize-avoid, reach refers to the goal.
In reach-stabilize, reach refers to the safe set and stabilize refers to the goal point.

Control barrier functions (CBFs) and control Lyapunov functions (CLFs) provide systematic tools for certifying safety and convergence guarantees, respectively, for known system models \cite{ames2017control,romdlony2014uniting,ames2019control}.
In practice, the model parameters are uncertain, and a controller certified under an assumed model need not retain either guarantee under the true parameters \cite{xu2015robustness,kolathaya2018inputtostate}.
The safety guarantee of a standard CBF in terms of the forward invariance of a safe set only holds if the state starts inside the set.
The state can be outside the set at the start of operation or at a later time.
A violation can arise from an event acting on the system, such as an external impulse or an actuator interruption, or from an abrupt change of the safe set that does not involve the system's state.
For instance, when the coverage cone of a ground station's radar is repositioned, the region that is safe for an aircraft or a mobile ground agent changes instantly, and a state that was safe a moment earlier becomes a violation that the agent must correct.
The recovery must be certified when the parameters are unknown, and the uncertainty must be compensated while the system recovers, not only after it has returned to the safe set.
A controller is therefore needed that solves the reach-stabilize problem under unknown parameters with a formal guarantee.
The three problems are shown in Figure \ref{fig:problems}.

\begin{figure}[t]
\centering
\resizebox{\columnwidth}{!}{%
\begin{tikzpicture}[
    >=Stealth,
    traj/.style={very thick,-{Stealth[length=2.2mm]}},
    note/.style={anchor=west,text width=2.5cm,align=left,font=\footnotesize},
    lead/.style={thin}
]
\colorlet{cra}{blue!70!black}
\colorlet{csa}{orange!85!black}
\colorlet{crs}{green!45!black}

\draw[thick] (2,1.5) ellipse (1.9 and 1.35);
\node at (1.0,2.5) {$\mathcal C$};
\node[font=\footnotesize,gray] at (3.85,-0.05) {unsafe};

\draw[fill=gray!25] (2.3,1.6) circle (0.43);
\fill (2.3,1.6) circle (1.4pt);
\node[font=\footnotesize] at (2.5,1.76) {$x_{\mathrm c}$};
\node[font=\footnotesize] at (1.9,1.1) {$\mathcal T$};

\fill[cra] (0.5,2.05) circle (1.6pt);
\draw[traj,cra] (0.5,2.05) .. controls (1.2,2.5) and (1.9,2.3) .. (2.14,1.88);
\draw[traj,cra,dashed] (2.14,1.88) .. controls (2.6,1.95) and (3.0,1.9) .. (3.3,1.4);
\node[note,text=cra] (na) at (4.3,2.5)
  {reach-avoid: reaches $\mathcal T$, need not remain in it};
\draw[lead,cra] (na.west) -- (2.3,2.17);

\fill[csa] (3.1,0.75) circle (1.6pt);
\draw[traj,csa] (3.1,0.75) .. controls (3.2,1.3) and (2.7,1.3) .. (2.34,1.57);
\node[note,text=csa] (ns) at (4.3,0.55)
  {stabilize-avoid: starts in $\mathcal C$, converges to $x_{\mathrm c}$};
\draw[lead,csa] (ns.west) -- (3.1,0.75);

\fill[crs] (-0.3,0.0) circle (1.6pt);
\draw[very thick,crs] (-0.3,0.0) .. controls (0.0,0.3) and (0.35,0.5) .. (0.55,0.63);
\node[crs,font=\LARGE] at (0.55,0.63) {$\star$};
\node[anchor=east,font=\scriptsize,crs] at (0.42,0.66) {entry};
\draw[traj,crs] (0.55,0.63) .. controls (1.1,1.0) and (1.6,1.2) .. (2.26,1.57);
\node[note,text=crs,text width=4.6cm] (nr) at (-0.5,-0.85)
  {reach-stabilize: starts outside $\mathcal C$, enters $\mathcal C$ in finite time, converges to $x_{\mathrm c}$};
\draw[lead,crs] (-0.3,-0.55) -- (-0.3,0.0);
\end{tikzpicture}}
\caption{The three problems for a safe set $\mathcal C$, target set $\mathcal T$, and goal $x_{\mathrm c}$.
Reach-avoid requires the state to reach $\mathcal T$ while avoiding the unsafe region.
Stabilize-avoid requires convergence to the goal from an initial state in $\mathcal C$.
Reach-stabilize allows an initial state outside $\mathcal C$, and requires entry into $\mathcal C$ in finite time, invariance thereafter, and convergence to the goal.}
\label{fig:problems}
\end{figure}

Recovery from an initial violation is addressed for known models by finite-time convergence CBFs, which certify reachability of a superlevel set from a violating state and forward invariance after entry \cite{srinivasan2018finite}.
Robust finite-time CBFs extend the recovery guarantee to bounded disturbances or adversarial inputs, but retain a fixed worst-case uncertainty description \cite{hassan2024resilience}.
The certified property in these works is reaching; remaining in a set, and convergence to a goal point is not part of it.
Robust CBFs combined with fixed-time CLFs enforce forward invariance of a safe set and convergence to a goal set within a user-defined time under additive disturbances and state-estimation errors \cite{garg2021robust,garg2022fixedtime}.
These works treat uncertainty as a bounded disturbance, use a conservative method based on worst-case bounds, and do not utilize online parameter estimation to reduce conservatism.

Stabilize-avoid under parametric uncertainty is addressed by adaptive CBFs and adaptive CLFs, assuming the system starts inside the safe set \cite{taylor2020adaptive,isaly2021adaptive,lopez2023unmatched}.
Adaptive CBFs are introduced in \cite{taylor2020adaptive} and unified with adaptive CLFs in a single QP.
The safety guarantee there is forward invariance of a family of sets parameterized by the current estimate.
The update laws for the CLF and the CBF need not be simultaneously satisfiable under one estimate, and the combined controller thus maintains two separate estimates, one adapted for stabilization and one for safety.
Robust adaptive CBFs combine a tightened safe set with set-membership identification of the parameters \cite{lopez2021robust}.
Stabilization to a desired terminal state is handled by a separate controller with its own parameter estimate.

The closest related work is \cite{black2021fixedtime}, where a fixed-time adaptation law for parameter-affine uncertainty is developed, which requires a persistently excited regressor. In that work, a decreasing bound on the estimation error enters a CBF condition that certifies forward invariance of a tightened safe set from an initial state inside it.
The convergence to a goal point is enforced through a CLF constraint relaxed by a slack variable and hence, is not guaranteed. Moreover, recovery from an initial state outside the safe set is not addressed.
On the other hand, the proposed method here certifies both without persistence of excitation, under a single estimate and a single error bound.


Recent methods establish nonincreasing or vanishing CBF conservatism under excitation \cite{gutierrez2025vanishing}, or enlarge the admissible control set relative to a robust CBF \cite{kamaldar2026composite}.
Across these works, the certified property under parametric uncertainty is forward invariance from an initially safe state.
Convergence to a goal point is either certified through a second, separately adapted mechanism \cite{taylor2020adaptive,lopez2021robust}, or evaluated in simulation without a closed-loop guarantee \cite{black2021fixedtime}.
To the best of our knowledge, the reach-stabilize problem under unknown constant parameters has not been addressed, that is, finite-time recovery from an initial violation, forward invariance thereafter, and convergence to a goal point, certified under a single online estimate in a single QP.

This paper solves the reach-stabilize problem for control-affine systems in which the uncertainty enters as a known regressor matrix multiplying an unknown constant parameter vector.
The solution is a controller in which one online estimate of the parameter and one computable bound on its error jointly certify recovery of the safe set, forward invariance, and convergence to the goal point.
The contributions of this paper are as follows.
\begin{enumerate}
    \item The paper presents a novel mechanism to certify reach-stabilize behavior under parametric uncertainty.
    From an initial state inside the safe set, the state remains in the set.
    From an initial state outside the set, the state enters the set in finite time and remains there.
    In both cases, the state converges to a goal point.
    \item The certified finite-time recovery bound equals that of the known-parameter design and does not depend on the unknown parameter.
    The uncertainty thus changes the control effort required to meet the bound, and does not change the bound.
    \item The guarantees require a known bound on the initial estimation error and do not require persistence of excitation.
    Excitation over an interval tightens the error bound and thus reduces the margins, and the guarantees hold when no such excitation occurs.
\end{enumerate}

The paper is organized as follows.
Section \ref{sec:problem} formulates the problem.
Section \ref{sec:para_estimator} constructs the estimate and the error bound.
Section \ref{sec:control} designs the modified CBF and CLF conditions and the QP, and proves the guarantees.
Section \ref{sec:numerical} presents numerical examples, and Section \ref{sec:conclusions} concludes the paper.

\section{Problem Formulation}
\label{sec:problem}
Consider the control-affine system
\begin{equation}
    \dot x=f(x)+g(x)u+\phi(x)\theta^\star,
    \label{eq:uncertain-system}
\end{equation}
where $x\in\mathcal D\subseteq\BBR^n$ is the measured state, $u\in\mathcal U\subset\BBR^m$ is the input, 
$f\colon\mathcal D\to\BBR^n,$
$g\colon\mathcal D\to\BBR^{n\times m},$ and
$\phi\colon\mathcal D\to\BBR^{n\times p}$ are known and locally Lipschitz, and 
$\theta^\star\in\BBR^p$ is an unknown parameter vector.
%
%
The input set is the nonempty compact polytope $\mathcal U=\{u\in\BBR^m\colon Pu\leq q\}.$
Let
\begin{equation*}
    \mathcal C \isdef \{x\in\mathcal D:h(x)\geq0\},
\end{equation*}
where $h\in C^1$ has relative degree one on $\mathcal D.$
Let $x_\rmc\in\mathcal D$ denote the goal point.
The initial state $x(0)$ may lie outside $\mathcal C,$ for instance following an abrupt event such as an external impulse, an actuator interruption, or a change in the safe set.


{\color{black}
\begin{problem}
    Given system \eqref{eq:uncertain-system}, a compact set $\SP\subset\mathbb R^p$ such that $\theta^\star\in\SP$, a safe set $\mathcal C$, and a goal point $x_\rmc$,
    design a control policy $k:\mathbb R\times\mathcal D\times\mathbb R^p\to\mathcal U$ such that the closed-loop trajectories under $u=k(t,x,\hat\theta)$ satisfy
    \begin{enumerate}
        \item \textbf{Reach}: For all $x(0)\notin \mathcal C$, there exists $T<\infty$ such that $x(t)\in \mathcal C$ for all $t\geq T$, and for all $x(0)\in \mathcal C$, $x(t)\in\mathcal C$ for all $t\geq 0$; and
        \item \textbf{Stabilize}: For all $x(0)\in \mathcal D$, $\lim_{t\to \infty}x(t) = x_\rmc$. 
    \end{enumerate}
\end{problem}
}


In this paper, we construct a control law in which an estimate $\hat\theta(t)$ of $\theta^\star$ and a bound $\beta(t)$ on the estimation error $\tilde\theta\isdef\theta^\star-\hat\theta$ jointly certify safe-set recovery and convergence to the goal point.
Our proposed method ensures that the bound $\beta$ is computable online and nonincreasing, and satisfies $\|\tilde\theta(t)\|\leq\beta(t)$ for all $t\geq 0.$

The construction has four steps.
First, $\hat\theta$ is generated by a gradient update law driven by a filtered regressor.
Second, $\beta$ is constructed from a known bound on the initial estimation error, and is tightened by an accumulated regressor when the regressor is excited over an interval, without requiring persistence of excitation.
Third, a margin proportional to $\beta$ is added to the control barrier function condition and to the control Lyapunov function condition, each evaluated at $\hat\theta.$
Fourth, the two modified conditions are enforced in a single quadratic program.
The margins absorb the estimation error, and thus the finite-time recovery and convergence guarantees of the known-parameter design hold for the unknown parameter whenever the quadratic program is feasible.

\section{Parameter Estimation with Computable Error Bounds}
\label{sec:para_estimator}
This section constructs the estimate $\hat\theta$ and a bound $\beta$ such that $|\tilde\theta(t)|\leq\beta(t)$ for all $t\geq0.$
The uncertain dynamics \eqref{eq:uncertain-system} are written as a linear regressor equation, which is filtered to remove the dependence on $\dot x,$ and $\hat\theta$ is updated by a gradient law.
A bound on $\tilde\theta$ is then obtained by propagating an assumed bound on the initial error, and this bound is computable and nonincreasing.
Since the conservatism of this bound is fixed at initialization, an accumulated regressor is used to recover the estimation error exactly under interval excitation, which yields a tighter bound.

\subsection{Gradient-based parameter estimator}

To construct the estimator, note that \eqref{eq:uncertain-system} implies the
regressor equation
\begin{align}
    \dot x-f(x)-g(x)u=\phi(x)\theta^\star.
    \label{eq:regressor-eqn}
\end{align}
Since $x$ and $u$ are measured, all signals on the left-hand side of \eqref{eq:regressor-eqn} except
$\dot x$ are computable.
Since $\dot x$ is not measured, \eqref{eq:regressor-eqn} is filtered with
a strictly proper filter $R(s)$ to obtain the linear regressor
\begin{align}
    x_\rmf=\Phi_\rmf\theta^\star,
    \label{eq:linear-regression}
\end{align}
where
\begin{align}
    x_\rmf
        &\isdef
            R(s)[\dot x-f(x)-g(x)u],
    \label{eq:xf-def}\\
    \Phi_\rmf
        &\isdef
            R(s)[\phi(x)].
    \label{eq:phif-def}
\end{align}
where $x_\rmf\in\BBR^n,$ $\Phi_\rmf\in\BBR^{n\times p},$ and $R(s)$ is applied elementwise.
In particular, letting $R(s)=\frac{1}{s+p_\rmf},$ where $p_\rmf>0,$ yields
\begin{align}
    x_\rmf
        &=
            x-\frac{p_\rmf x}{s+p_\rmf}-\frac{f(x)+g(x)u}{s+p_\rmf},
    \label{eq:xf-realization}\\
    \Phi_\rmf
        &=
            \frac{\phi(x)}{s+p_\rmf},
    \label{eq:phif-realization}
\end{align}
where $\frac{sx}{s+p_\rmf}=x-\frac{p_\rmf x}{s+p_\rmf}$ is used to
eliminate $\dot x,$ and thus $x_\rmf$ and $\Phi_\rmf$ are computed online from
$x$ and $u$ \cite{delgado2024adaptive}.

Since $\Phi_\rmf$ and $x_\rmf$ are computed online, $\theta^\star$ is the only
unknown in \eqref{eq:linear-regression}.
The prediction error is
\begin{align}
    e\isdef x_\rmf-\Phi_\rmf\hat\theta
    \label{eq:prediction-error}
\end{align}
and, by \eqref{eq:linear-regression}, satisfies $e=\Phi_\rmf\tilde\theta.$
The estimate is updated by the gradient law
\begin{align}
    \dot{\hat\theta}=\Gamma\Phi_\rmf^\rmT e,
    \label{eq:gradient-law}
\end{align}
where $\hat\theta\in\BBR^p$ and $\Gamma>0$ is a scalar gain.

\subsection{Computable bound on the estimation error}

Let $\hat\theta(0)\in\BBR^p$ be an initial estimate.
\begin{assumption}
\label{assum:initial-bound}
A constant $\beta_0>0$ is known such that
$\|\theta^\star-\hat\theta(0)\|\le\beta_0.$
\end{assumption}

\begin{proposition}
\label{prop:error-bound}
Let Assumption \ref{assum:initial-bound} hold, and define
\begin{align}
    \beta_\rmp(t)
    \isdef
    \left(
    \beta_0^2-2\Gamma\int_{0}^{t}\|e(\tau)\|^2\,\rmd\tau
    \right)^{1/2}.
    \label{eq:beta-p}
\end{align}
Then, for all $t\ge0,$ $\beta_\rmp(t)$ is well defined and nonnegative,
$\beta_\rmp$ is nonincreasing, and $|\tilde\theta(t)|\le\beta_\rmp(t).$
\end{proposition}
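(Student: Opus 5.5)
The proof is a Lyapunov-type argument with $V(t)\isdef\|\tilde\theta(t)\|^2$. Since $\theta^\star$ is constant, \eqref{eq:gradient-law} gives $\dot{\tilde\theta}=-\dot{\hat\theta}=-\Gamma\Phi_\rmf^\rmT e$. Using $e=\Phi_\rmf\tilde\theta$, which follows from \eqref{eq:linear-regression} and \eqref{eq:prediction-error}, we obtain
\begin{align*}
    \dot V=2\tilde\theta^\rmT\dot{\tilde\theta}=-2\Gamma\tilde\theta^\rmT\Phi_\rmf^\rmT e=-2\Gamma\|e\|^2 .
\end{align*}
This is an exact identity, not merely an inequality. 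Integrating from $0$ to $t$ yields
\begin{align*}
    \|\tilde\theta(t)\|^2=\|\tilde\theta(0)\|^2-2\Gamma\int_0^t\|e(\tau)\|^2\,\rmd\tau .
\end{align*}

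The claims then follow in order. First, the left-hand side above is nonnegative. Assumption \ref{assum:initial-bound} gives $\|\tilde\theta(0)\|^2\le\beta_0^2$, so
\begin{align*}
    0\le\|\tilde\theta(t)\|^2\le\beta_0^2-2\Gamma\int_0^t\|e(\tau)\|^2\,\rmd\tau .
\end{align*}
Hence the radicand in \eqref{eq:beta-p} is nonnegative for every $t\ge0$, so $\beta_\rmp(t)$ is well defined as a real, nonnegative square root. Second, taking square roots of the same chain gives $\|\tilde\theta(t)\|\le\beta_\rmp(t)$. Third, the radicand is nonincreasing in $t$ because the integrand $\|e\|^2$ is nonnegative. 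Since the square root is monotone on $[0,\infty)$, $\beta_\rmp$ is nonincreasing.

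The step that needs care is justifying the calculation along solutions, not the algebra itself. The identity $e=\Phi_\rmf\tilde\theta$ requires the filter initial conditions to be consistent. Concretely, the realizations \eqref{eq:xf-realization}--\eqref{eq:phif-realization} must be initialized so that the filtered regression \eqref{eq:linear-regression} holds exactly, with no decaying transient term; the natural choice is zero filter states with $x_\rmf(0)=0$ and $\Phi_\rmf(0)=0$. I would state this choice explicitly, or note that the paper implicitly assumes it. We also need $\hat\theta$, $x_\rmf$, $\Phi_\rmf$ and the closed-loop state to exist on $[0,t]$, so that $V$ is absolutely continuous and the fundamental theorem of calculus applies. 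The state and input signals are continuous, or at least measurable and bounded, and $u\in\mathcal U$ lies in a compact set. The filtered signals are therefore locally absolutely continuous, and the integral $\int_0^t\|e\|^2$ is finite on any interval of existence. With these points recorded, the proof reduces to the three displayed lines above.
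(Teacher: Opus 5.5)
Your proof is correct and follows the paper's argument essentially line for line. Both use the exact identity $\frac{\rmd}{\rmd t}\|\tilde\theta\|^2=-2\Gamma\|e\|^2$, integrate it, and apply Assumption~\ref{assum:initial-bound} to get a nonnegative radicand, the bound $\|\tilde\theta(t)\|\le\beta_\rmp(t)$, and monotonicity; your added remark that $e=\Phi_\rmf\tilde\theta$ holds exactly only with consistent filter initialization (e.g., $x_\rmf(0)=0$, $\Phi_\rmf(0)=0$) is a fair point that the paper leaves implicit.
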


\begin{proof}
Since $\dot{\tilde\theta}=-\dot{\hat\theta},$ it follows from
\eqref{eq:prediction-error} and \eqref{eq:gradient-law} that
\begin{align}
    \frac{\rmd}{\rmd t}\frac{\|\tilde\theta\|^2}{2}
    =
    -\Gamma\tilde\theta^\rmT\Phi_\rmf^\rmT\Phi_\rmf\tilde\theta
    =
    -\Gamma\|e\|^2,
    \label{eq:lyapunov-identity}
\end{align}
and thus, for all $t\ge0,$
\begin{align}
    \|\tilde\theta(t)\|^2
    =
    \|\tilde\theta(0)\|^2-2\Gamma\int_{0}^{t}\|e(\tau)\|^2\,\rmd\tau.
    \label{eq:error-identity}
\end{align}
Since $\|\tilde\theta(t)\|^2\ge0,$ \eqref{eq:error-identity} implies
$2\Gamma\int_{0}^{t}\|e(\tau)\|^2\,\rmd\tau\le\|\tilde\theta(0)\|^2,$ which, with
Assumption \ref{assum:initial-bound}, implies that the argument of the square
root in \eqref{eq:beta-p} is nonnegative, and hence $\beta_\rmp$ is well
defined and nonnegative.
Furthermore, $\beta_\rmp$ is nonincreasing since the integral in
\eqref{eq:beta-p} is nondecreasing.
Finally, \eqref{eq:error-identity} and Assumption \ref{assum:initial-bound}
imply $\|\tilde\theta(t)\|^2\le\beta_\rmp^2(t).$
\end{proof}

Since $e$ is computed online and $\Gamma$ is known, $\beta_\rmp$ is
computable.
In particular, $\beta_\rmp$ can be computed using the single integrator
\begin{align}
    \dot b=-2\Gamma\|e\|^2,
    \label{eq:beta-ode}
\end{align}
and $\beta_\rmp=\sqrt b,$ where $b(0)=\beta_0^2.$
Furthermore, since \eqref{eq:beta-ode} does not require inverting $\Phi_\rmf,$
$\beta_\rmp$ remains well defined at states where $\phi$ is unexcited.

\subsection{Exact estimation error under interval excitation}

Note that \eqref{eq:error-identity} implies
$\beta_\rmp^2(t)-\tilde\theta^2(t)=\beta_0^2-\tilde\theta^2(0),$ and thus the
conservatism of $\beta_\rmp$ is fixed at initialization and is not reduced by
data.
Estimators that avoid persistence of excitation include concurrent learning \cite{chowdhary2010concurrent} and dynamic regressor extension and mixing \cite{aranovskiy2016performance}.
To remove this floor, consider the accumulated regressor signals
$\overline\Phi$ and $\overline X$ defined by
\begin{align}
    \dot{\overline\Phi}
    &=
    -\mu\overline\Phi+\Phi_\rmf^\rmT\Phi_\rmf,
    &
    \overline\Phi(0)&=0,
    \label{eq:acc-phi}\\
    \dot{\overline X}
    &=
    -\mu\overline X+\Phi_\rmf^\rmT x_\rmf,
    &
    \overline X(0)&=0,
    \label{eq:acc-x}
\end{align}
where $\mu>0$ is the forgetting factor, and define
\begin{align}
    \Xi\isdef\overline\Phi\hat\theta-\overline X.
    \label{eq:xi}
\end{align}

\begin{proposition}
\label{prop:exact-error}
For all $t\ge0,$ $\overline\Phi(t)\ge0$ and
\begin{align}
    \Xi(t)=-\overline\Phi(t)\tilde\theta(t).
    \label{eq:xi-identity}
\end{align}
Furthermore, if $\overline\Phi(t)>0,$ then
$\tilde\theta(t)=-\overline\Phi^{-1}(t)\Xi(t).$
\end{proposition}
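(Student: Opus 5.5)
We will prove the two claims by direct computation from \eqref{eq:acc-phi}, \eqref{eq:acc-x}, and \eqref{eq:linear-regression}, without appealing to Proposition \ref{prop:error-bound}.

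\emph{Positive semidefiniteness.} Since \eqref{eq:acc-phi} is a linear time-invariant ODE with zero initial condition, variation of constants gives
\begin{align*}
    \overline\Phi(t)=\int_0^t e^{-\mu(t-\tau)}\Phi_\rmf^\rmT(\tau)\Phi_\rmf(\tau)\,\rmd\tau .
\end{align*}
For any $v\in\BBR^p$,
\begin{align*}
    v^\rmT\overline\Phi(t)v=\int_0^t e^{-\mu(t-\tau)}\|\Phi_\rmf(\tau)v\|^2\,\rmd\tau\ge0 .
\end{align*}
Hence $\overline\Phi(t)\ge0$; symmetry is also clear from this representation.

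\emph{Identity \eqref{eq:xi-identity}.} Substituting $x_\rmf=\Phi_\rmf\theta^\star$ from \eqref{eq:linear-regression} into \eqref{eq:acc-x} gives
\begin{align*}
    \dot{\overline X}=-\mu\overline X+\Phi_\rmf^\rmT\Phi_\rmf\theta^\star .
\end{align*}
Since $\theta^\star$ is constant, the signal $\overline\Phi\theta^\star$ satisfies the same linear ODE and the same zero initial condition. Uniqueness of solutions therefore gives $\overline X(t)=\overline\Phi(t)\theta^\star$ for all $t\ge0$. Then, by \eqref{eq:xi},
\begin{align*}
    \Xi=\overline\Phi\hat\theta-\overline\Phi\theta^\star=-\overline\Phi\tilde\theta .
\end{align*}
The same conclusion also follows from the explicit convolution formula for $\overline X$.

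\emph{Inversion.} If $\overline\Phi(t)>0$, then $\overline\Phi(t)$ is invertible. Multiplying \eqref{eq:xi-identity} by $\overline\Phi^{-1}(t)$ gives $\tilde\theta(t)=-\overline\Phi^{-1}(t)\Xi(t)$.

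\emph{Main obstacle.} The only delicate point is the regularity needed for \eqref{eq:linear-regression} to hold pointwise in time. The filter realizations \eqref{eq:xf-realization} and \eqref{eq:phif-realization} must be initialized consistently, for example with filter states chosen so that $x_\rmf(0)=\Phi_\rmf(0)\theta^\star$; otherwise \eqref{eq:linear-regression} carries an exponentially decaying mismatch term. I would state that we adopt the paper's standing assumption that \eqref{eq:linear-regression} holds exactly, as in Proposition \ref{prop:error-bound}. The linear ODE uniqueness argument then needs only local integrability of $\Phi_\rmf^\rmT\Phi_\rmf$, which follows from continuity of the closed-loop signals.
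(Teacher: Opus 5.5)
Your proof is correct and follows the paper's argument. Both show that $\overline X-\overline\Phi\theta^\star$ satisfies a linear ODE with zero initial condition (you phrase it as $\overline X$ and $\overline\Phi\theta^\star$ solving the same ODE), and both read $\overline\Phi\ge0$ off the variation-of-constants formula. Your quadratic-form step $v^\rmT\overline\Phi(t)v=\int_0^t e^{-\mu(t-\tau)}\|\Phi_\rmf(\tau)v\|^2\,\rmd\tau$ is also the precise matrix version of the paper's scalar integrand $\|\Phi_\rmf(\tau)\|^2$, and your remark on consistent filter initialization makes explicit an assumption the paper uses tacitly when it treats \eqref{eq:linear-regression} as exact.
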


\begin{proof}
It follows from \eqref{eq:linear-regression}, \eqref{eq:acc-phi}, and
\eqref{eq:acc-x} that
\begin{align}
    \frac{\rmd}{\rmd t}
    \left(\overline X-\overline\Phi\theta^\star\right)
    =
    -\mu\left(\overline X-\overline\Phi\theta^\star\right),
    \label{eq:acc-error-dynamics}
\end{align}
which, since $\overline X(0)-\overline\Phi(0)\theta^\star=0,$ implies
$\overline X=\overline\Phi\theta^\star,$ and thus \eqref{eq:xi} implies
\eqref{eq:xi-identity}.
Furthermore, \eqref{eq:acc-phi} implies
\begin{align}
    \overline\Phi(t)
    =
    \int_0^te^{-\mu(t-\tau)}\|\Phi_\rmf(\tau)\|^2\,\rmd\tau,
    \label{eq:acc-phi-solution}
\end{align}
which is nonnegative.
\end{proof}

Since \eqref{eq:error-identity} implies that $|\tilde\theta|$ is
nonincreasing, every bound on $|\tilde\theta|$ obtained at time $\tau$ remains
valid for all $t\ge\tau.$
Propositions \ref{prop:error-bound} and \ref{prop:exact-error} thus yield the
bound
\begin{align}
    \beta(t)
    \isdef
    \min
    \left\{
    \beta_\rmp(t),\,
    \inf_{\tau\in\ST_t}
    \left|\overline\Phi^{-1}(\tau)\Xi(\tau)\right|
    \right\},
    \label{eq:beta}
\end{align}
where
\begin{align}
    \ST_t\isdef\{\tau\in[0,t]\colon\overline\Phi(\tau)>0\}.
    \label{eq:excitation-times}
\end{align}

\begin{corollary}
\label{cor:beta}
For all $t\ge0,$ $\beta(t)\ge0,$ $\beta$ is nonincreasing, and
$|\tilde\theta(t)|\le\beta(t).$
\end{corollary}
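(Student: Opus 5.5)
The corollary follows from Propositions \ref{prop:error-bound} and \ref{prop:exact-error}, combined with the monotonicity of $\|\tilde\theta\|$ from \eqref{eq:error-identity}. I would prove its three claims in order: nonnegativity, monotonicity, and the bound.

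\emph{Nonnegativity.} By Proposition \ref{prop:error-bound}, $\beta_\rmp(t)\ge0$. Each element of the set whose infimum appears in \eqref{eq:beta} is a norm, so that infimum is nonnegative. When $\ST_t=\emptyset$, we use the convention $\inf\emptyset=+\infty$. Hence $\beta(t)\ge0$ and $\beta(t)$ is finite, since $\beta(t)\le\beta_\rmp(t)<\infty$.

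\emph{Monotonicity.} Let $s\le t$. Then $\ST_s\subseteq\ST_t$, because the defining condition in \eqref{eq:excitation-times} depends only on $\tau$ and not on the horizon. An infimum over a larger set is no larger, so $\inf_{\tau\in\ST_t}\|\overline\Phi^{-1}(\tau)\Xi(\tau)\|\le\inf_{\tau\in\ST_s}\|\overline\Phi^{-1}(\tau)\Xi(\tau)\|$. Together with $\beta_\rmp(t)\le\beta_\rmp(s)$ from Proposition \ref{prop:error-bound}, and the fact that the minimum of two nonincreasing functions is nonincreasing, this gives $\beta(t)\le\beta(s)$.

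\emph{Bound.} Fix $t$. Proposition \ref{prop:error-bound} gives $\|\tilde\theta(t)\|\le\beta_\rmp(t)$. Now take any $\tau\in\ST_t$. By Proposition \ref{prop:exact-error}, $\tilde\theta(\tau)=-\overline\Phi^{-1}(\tau)\Xi(\tau)$, so $\|\tilde\theta(\tau)\|=\|\overline\Phi^{-1}(\tau)\Xi(\tau)\|$. By \eqref{eq:error-identity}, $\|\tilde\theta\|^2$ is $\|\tilde\theta(0)\|^2$ minus a nondecreasing integral, so $\|\tilde\theta\|$ is nonincreasing. Since $\tau\le t$, this yields $\|\tilde\theta(t)\|\le\|\overline\Phi^{-1}(\tau)\Xi(\tau)\|$. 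Because $\|\tilde\theta(t)\|$ is a lower bound for every element of the set, it is also bounded by the infimum. Combining with the $\beta_\rmp$ bound, $\|\tilde\theta(t)\|\le\beta(t)$.

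The only delicate point is bookkeeping rather than an analytical obstacle. One must handle the empty-set convention, and note that the infimum need not be attained, which is harmless for a lower-bound argument. One must also use monotonicity of $\|\tilde\theta\|$ rather than of $\beta_\rmp$ to transport the exact value at time $\tau$ forward to time $t$. The norm $|\cdot|$ in \eqref{eq:beta} is read as the Euclidean norm, consistent with \eqref{eq:error-identity}.
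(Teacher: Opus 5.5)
Your proposal is correct and follows the paper's proof essentially step for step. Nonnegativity comes from Proposition~\ref{prop:error-bound} and the infimum being over norms, monotonicity from $\ST_s\subseteq\ST_t$ together with the monotonicity of $\beta_\rmp$, and the bound from transporting the exact identity of Proposition~\ref{prop:exact-error} at $\tau\in\ST_t$ forward to $t$ via the monotonicity of $\|\tilde\theta\|$ from \eqref{eq:error-identity}; the only addition is your explicit convention $\inf\emptyset=+\infty$ for $\ST_t=\emptyset$, which the paper leaves implicit.
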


\begin{proof}
Proposition \ref{prop:error-bound} implies $\beta_\rmp\ge0,$ and Proposition
\ref{prop:exact-error} implies that each term in the infimum in
\eqref{eq:beta} is nonnegative, and hence $\beta\ge0.$
Furthermore, $\beta_\rmp$ is nonincreasing, and $\ST_{t_1}\subseteq\ST_{t_2}$
for all $t_2\ge t_1,$ which implies that the infimum in \eqref{eq:beta} is
nonincreasing, and hence $\beta$ is nonincreasing.
Finally, Proposition \ref{prop:exact-error} implies that, for all
$\tau\in\ST_t,$ $|\tilde\theta(\tau)|=|\overline\Phi^{-1}(\tau)\Xi(\tau)|,$
which, since $|\tilde\theta|$ is nonincreasing by \eqref{eq:error-identity},
implies $|\tilde\theta(t)|\le|\overline\Phi^{-1}(\tau)\Xi(\tau)|,$ and
combining with Proposition \ref{prop:error-bound} yields
$|\tilde\theta(t)|\le\beta(t).$
\end{proof}

Note that the infimum in \eqref{eq:beta} is a running minimum and is thus computed by a monotone hold.
Furthermore, \eqref{eq:acc-phi-solution} implies that $\overline\Phi(t)\succ0$ if and only if no nonzero $v\in\BBR^p$ satisfies $\Phi_\rmf(\tau)v=0$ for almost every $\tau\in[0,t],$ and thus $\ST_t$ is nonempty under interval excitation in every parameter direction rather than persistent excitation.

\section{Control design with parameter adaptation}
\label{sec:control}
This section uses $\hat\theta$ and $\beta$ from Section III to enforce safety and convergence under the unknown parameter $\theta^\star.$
The CBF and CLF conditions of the known-parameter design contain $\theta^\star$ and thus cannot be evaluated.
Each condition is instead evaluated at $\hat\theta$ and augmented with a margin, $\delta_h$ for the CBF condition and $\delta_V$ for the CLF condition, that bounds the contribution of $\tilde\theta$ through $\beta.$
Theorem \ref{thm:safety with delta} shows that the modified CBF condition yields forward invariance of $\mathcal C$ and finite-time recovery to $\mathcal C,$ with a recovery-time bound that does not depend on $\theta^\star$ or $\beta.$
Theorem \ref{thm:convergence with delta} shows that the modified CLF condition yields exponential convergence of $V.$
The two modified conditions are then enforced in a single quadratic program.

\subsection{Modified CBF condition}
When the parameter $\theta^\star$ is known, the CBF condition reads
\begin{align}
    L_fh(x) + L_gh(x)u + L_\phi h(x)\theta^\star & \nonumber\\
    +\gamma\operatorname{sgn}(h(x))|h(x)|^\rho & \geq 0,
\end{align}
for all $x\in \mathcal C$.
However, since the parameter $\theta^\star$ is unknown, this condition cannot be enforced.
Using the estimate $\hat \theta$, we use the following modified condition
\begin{align}
    L_fh(x) + L_gh(x)u + L_\phi h(x)\hat \theta&\nonumber \\
    +\gamma\operatorname{sgn}(h(x))|h(x)|^\rho&\geq \delta_h,
\end{align}
where $\delta_h$ is an appropriately chosen estimation-error dependent safety margin.
Based on Corollary \ref{cor:beta}, we can choose $\delta_h(t) = \|L_\phi h(x)\||\beta(t)|$ at time $t\geq 0$ to ensure safety, as shown the following result.

\begin{theorem}\label{thm:safety with delta}
    Under the conditions of Corollary \ref{cor:beta}, let $\gamma>0$ and $\rho\in(0,1),$ and let
    \begin{align}\label{eq: delta CBF}
        \delta_h(t) = \|L_\phi h(x)\||\beta(t)|.
    \end{align}
    If there exists a control signal $u:\mathbb R\to \mathcal U$ such that
    \begin{align}\label{eq: cbf delta cond}
        L_fh(x(t)) + L_gh(x(t))u + L_\phi h(x)\hat \theta(t) & \nonumber\\
        +\gamma\operatorname{sgn}(h(x))|h(x)|^\rho & \geq \delta_h(t)
    \end{align}
    along the solution $x(\cdot)$ of \eqref{eq:uncertain-system} for all $t\geq 0$, then the set $\mathcal C$ is forward invariant for \eqref{eq:uncertain-system} for every $x(0)\in \mathcal C.$
    Furthermore, for every $x(0)\notin \mathcal C,$ the trajectory reaches $\mathcal C$ at a time
    \begin{align}\label{eq: recovery time}
        T\le\frac{|h(x(0))|^{1-\rho}}{\gamma(1-\rho)}.
    \end{align}
\end{theorem}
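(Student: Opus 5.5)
The plan is to reduce both claims to a scalar differential inequality for $h$ along the closed-loop trajectory, and then apply standard comparison arguments for finite-time convergence CBFs \cite{srinivasan2018finite}.

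\emph{Step 1: the differential inequality.} Along a solution of \eqref{eq:uncertain-system},
\begin{align*}
\dot h = L_fh(x)+L_gh(x)u+L_\phi h(x)\hat\theta+L_\phi h(x)\tilde\theta .
\end{align*}
By Cauchy--Schwarz and Corollary \ref{cor:beta}, $L_\phi h(x)\tilde\theta\ge -\|L_\phi h(x)\|\,\|\tilde\theta\|\ge -\|L_\phi h(x)\|\beta(t)=-\delta_h(t)$. Substituting \eqref{eq: cbf delta cond} then gives
\begin{align*}
\dot h(x(t))\ge -\gamma\operatorname{sgn}(h(x(t)))|h(x(t))|^\rho \quad \text{for all } t\ge0 .
\end{align*}
The uncertainty has now disappeared completely; the bound on $T$ is therefore independent of $\theta^\star$ and $\beta$. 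Note that $h(x(t))$ is absolutely continuous because $h\in C^1$ and $x$ is absolutely continuous, so the inequality holds almost everywhere, which suffices.

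\emph{Step 2: forward invariance.} Suppose $h(x(0))\ge0$ and that $h(x(t_1))<0$ for some $t_1$. Let $t_0=\sup\{t\le t_1: h(x(t))\ge0\}$. Then $h(x(t_0))=0$ and $h<0$ on $(t_0,t_1]$. On that interval $\dot h\ge\gamma|h|^\rho>0$, so $h$ is increasing there. Integrating from $t_0$ gives $h(x(t_1))\ge h(x(t_0))=0$, a contradiction. This avoids any appeal to Nagumo-type results or uniqueness for the non-Lipschitz comparison ODE.

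\emph{Step 3: finite-time recovery.} For $h(x(0))<0$, let $z=-h>0$ and $W=z^{1-\rho}$. While $z>0$,
\begin{align*}
\dot W=(1-\rho)z^{-\rho}\dot z\le -(1-\rho)\gamma .
\end{align*}
Hence $W(t)\le W(0)-\gamma(1-\rho)t$, so $z$ must reach $0$ no later than $|h(x(0))|^{1-\rho}/(\gamma(1-\rho))$, which is \eqref{eq: recovery time}. Once $h=0$ is reached, Step 2 applied from that time yields invariance thereafter.

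The main subtlety I expect is regularity: $|h|^\rho$ is not Lipschitz at $h=0$, and the derivative of $W$ is only available while $z>0$. I would handle this by working on the open interval where $h<0$ and passing to the limit at the hitting time. I would also implicitly assume that solutions exist on $[0,\infty)$ under the given measurable input.
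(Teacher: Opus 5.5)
Your proposal is correct and follows the paper's proof essentially step for step. You use the same Cauchy--Schwarz and Corollary~\ref{cor:beta} reduction to $\dot h\ge-\gamma\operatorname{sgn}(h)|h|^\rho$, the same last-time-in-$\mathcal C$ contradiction for forward invariance, and the same $(-h)^{1-\rho}$ comparison for the recovery-time bound, and your remarks on absolute continuity and on working over the interval where $h<0$ make explicit regularity points that the paper leaves implicit.
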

\begin{proof}
    Under the conditions of Corollary \ref{cor:beta}, it holds that the parameter estimation error is bounded by $\beta$.
    For a control signal $u$, let $x$ be the solution of \eqref{eq:uncertain-system}.
    If the condition \eqref{eq: cbf delta cond} holds along the solution $x(\cdot)$, we have
    \begin{align*}
        L_fh(x(t)) + L_gh(x(t))u + & L_\phi h(x)\hat \theta(t) \nonumber\\
        +\gamma\operatorname{sgn}(h(x))|h(x)|^\rho & \geq \delta_h(t)\\
        \implies L_fh(x(t)) + L_gh(x(t))u + & L_\phi h(x)\theta^\star  \nonumber\\
        +\gamma\operatorname{sgn}(h(x))|h(x)|^\rho & \geq \delta_h(t) + L_\phi h(x)\tilde \theta(t).
    \end{align*}
    Now, it holds that
    \begin{align*}
        \delta_h(t) + L_\phi h(x)\tilde \theta(t) & \geq \delta_h(t) - \|L_\phi h(x)\tilde \theta(t)\|\\
        & \geq \delta_h(t) - \|L_\phi h(x)\||\beta(t)|\\
        & \geq 0,
    \end{align*}
    for all $t\geq 0$, where the second inequality follows from Corollary \ref{cor:beta} and the last inequality follows from \eqref{eq: delta CBF}.
    Consequently, we have that
    \begin{align*}
        \dot h(t) \geq -\gamma\operatorname{sgn}(h(x(t)))|h(x(t))|^\rho.
    \end{align*}
    Suppose $x(0)\in\mathcal C$ and $h(x(t_1))<0$ for some $t_1>0.$
    Let $t_0\in[0,t_1)$ be the last time at which $h(x(t_0))\geq0.$
    Then $h(x(t_0))=0$ and $h(x(t))<0$ on $(t_0,t_1],$ so $\dot h\geq\gamma|h|^\rho>0$ on $(t_0,t_1],$ which contradicts $h(x(t_1))<h(x(t_0)).$
    Hence $\mathcal C$ is forward invariant.
    Now suppose $h(x(0))<0$ and let $w=-h.$
    While $w>0,$ $\dot w\leq-\gamma w^\rho,$ which implies
    \begin{align*}
        \frac{\rmd}{\rmd t}w^{1-\rho}=(1-\rho)w^{-\rho}\dot w\leq-\gamma(1-\rho).
    \end{align*}
    Since $w^{1-\rho}\geq0,$ $w$ reaches zero no later than the time in \eqref{eq: recovery time}, and forward invariance then holds from that time on.
\end{proof}

\subsection{Modified CLF condition}
Let $V\colon\mathcal D\to\BBR$ be a $C^1$ control Lyapunov function for the goal point $x_\rmc,$ that is, $V(x_\rmc)=0$ and $\alpha(\|x-x_\rmc\|)\leq V(x)$ for all $x\in\mathcal D$ and some class-$\mathcal K$ function $\alpha.$
When $\theta^\star$ is known, the CLF condition reads
\begin{align}
    L_fV(x) + L_gV(x)u + L_\phi V(x)\theta^\star \leq -\lambda V(x)
\end{align}
for some $\lambda>0,$ and implies $\dot V\leq-\lambda V.$
Since $\theta^\star$ is unknown, this condition cannot be enforced.
The modified condition evaluates the left-hand side at $\hat\theta$ and adds a margin $\delta_V,$ that is,
\begin{align}
    L_fV(x) + L_gV(x)u + L_\phi V(x)\hat \theta \leq -\lambda V(x) - \delta_V,
\end{align}
where $\delta_V$ bounds the contribution of $\tilde\theta$ to $\dot V.$
By Corollary \ref{cor:beta}, the choice $\delta_V(t) = \|L_\phi V(x)\||\beta(t)|$ at time $t\geq 0$ ensures convergence, as shown in the following result.

\begin{theorem}\label{thm:convergence with delta}
    Under the conditions of Corollary \ref{cor:beta}, with
    \begin{align}\label{eq: delta CLF}
        \delta_V(t) = \|L_\phi V(x)\||\beta(t)|,
    \end{align}
    if there exists a control signal $u:\mathbb R\to \mathcal U$ such that the following holds:
    \begin{align}\label{eq: clf delta cond}
         L_fV(x(t)) + L_gV(x(t))u + L_\phi V(x(t))\hat \theta(t) \nonumber \\
         \leq -\lambda V(x(t)) - \delta_V(t),
    \end{align}
    along the solution $x(\cdot)$ of \eqref{eq:uncertain-system} for every $x(0)\in \mathcal D$ and for all $t\geq 0$, then $V(x(t))$ converges to $0$ exponentially and $x(t)$ converges to $x_\rmc$ for every $x(0)\in \mathcal D$.
\end{theorem}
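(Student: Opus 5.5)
The argument mirrors the proof of Theorem \ref{thm:safety with delta}: first convert the modified condition \eqref{eq: clf delta cond} into a differential inequality on $V$ along the true dynamics, then apply a comparison argument, and finally pass from $V$ to $\|x-x_\rmc\|$ through the class-$\mathcal K$ lower bound.

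\textbf{Step 1: the true derivative of $V$.} Along solutions of \eqref{eq:uncertain-system},
\begin{align*}
\dot V = L_fV(x)+L_gV(x)u+L_\phi V(x)\theta^\star = L_fV(x)+L_gV(x)u+L_\phi V(x)\hat\theta + L_\phi V(x)\tilde\theta.
\end{align*}
By \eqref{eq: clf delta cond}, the first three terms are bounded above by $-\lambda V-\delta_V$. For the remaining term, Cauchy--Schwarz together with Corollary \ref{cor:beta} gives
\begin{align*}
L_\phi V(x)\tilde\theta\le\|L_\phi V(x)\|\|\tilde\theta\|\le\|L_\phi V(x)\||\beta(t)|=\delta_V(t).
\end{align*}
The margin therefore cancels, and $\dot V\le-\lambda V$ for all $t\ge0$.

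\textbf{Step 2: comparison.} Apply the comparison lemma, or differentiate $e^{\lambda t}V(x(t))$ and observe it is nonincreasing. This gives $V(x(t))\le e^{-\lambda t}V(x(0))$, which is exponential convergence of $V$ to $0$. Note that $V\ge\alpha(\|x-x_\rmc\|)\ge0$, so $V(x(t))\to0$.

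\textbf{Step 3: convergence of the state.} From $\alpha(\|x(t)-x_\rmc\|)\le V(x(t))\le e^{-\lambda t}V(x(0))$, we get $\|x(t)-x_\rmc\|\le\alpha^{-1}(e^{-\lambda t}V(x(0)))$ while the argument lies in the range of $\alpha$, which holds for all sufficiently large $t$. Since $\alpha^{-1}$ is continuous with $\alpha^{-1}(0)=0$, this yields $x(t)\to x_\rmc$.

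\textbf{Main obstacle.} The delicate point is well-posedness rather than the estimate itself. The solution must exist for all $t\ge0$ and stay in $\mathcal D$, which I will take as part of the hypothesis ``along the solution $x(\cdot)$ for all $t\ge0$.'' The input $u$ must be measurable so that $V(x(\cdot))$ is absolutely continuous and the inequality holds almost everywhere; the comparison argument then still applies. If these regularity issues are set aside, the proof is a direct cancellation of $\delta_V$ against the worst-case $\tilde\theta$ contribution.
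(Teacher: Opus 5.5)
Your proposal is correct and follows essentially the same route as the paper. You split $L_\phi V\theta^\star$ into its $\hat\theta$ and $\tilde\theta$ parts, cancel $L_\phi V\tilde\theta$ against $\delta_V$ via Cauchy--Schwarz and Corollary~\ref{cor:beta} to get $\dot V\le-\lambda V$, integrate to $V(x(t))\le e^{-\lambda t}V(x(0))$, and conclude $x(t)\to x_\rmc$ from the class-$\mathcal K$ lower bound; your use of $\alpha^{-1}$ on the range of $\alpha$ and your well-posedness remarks are more careful than the paper's one-line final step but do not change the argument.
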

\begin{proof}
    Under the conditions of Corollary \ref{cor:beta}, it holds that the parameter estimation error is bounded by $\beta$.
    For a control signal $u$, let $x$ be the solution of \eqref{eq:uncertain-system}.
    If the condition \eqref{eq: clf delta cond} holds along the solution $x(\cdot)$, we have
    \begin{align*}
       L_fV(x(t)) + & L_gV(x(t))u + L_\phi V(x(t))\hat \theta(t) \\
       \leq &-\lambda V(x(t)) - \delta_V(t)\\
       \implies L_fV(x(t)) + & L_gV(x(t))u + L_\phi V(x(t))\theta^\star \\
       \leq &-\lambda V(x(t)) - \delta_V(t) + L_\phi V(x(t))\tilde \theta(t).
    \end{align*}
    Now, it holds that
    \begin{align*}
        L_\phi V(x(t))\tilde \theta(t) - \delta_V(t) & \leq \|L_\phi V(x(t))\tilde \theta(t)\| - \delta_V(t)\\
         & \leq \|L_\phi V(x(t))\||\beta(t)| - \delta_V(t)\\
         & \leq 0,
    \end{align*}
    where the second inequality follows from Corollary \ref{cor:beta} and the last inequality follows from \eqref{eq: delta CLF}.
    Consequently, we have that
    \begin{align*}
     L_fV(x(t)) + & L_gV(x(t))u + L_\phi V(x(t))\theta^\star \\
       \leq &-\lambda V(x(t)),
    \end{align*}
    which implies $\dot V(x(t))\leq-\lambda V(x(t)),$ and thus $V(x(t))\leq e^{-\lambda t}V(x(0))$ for all $t\geq0.$
    Since $\alpha(\|x(t)-x_\rmc\|)\leq V(x(t))$ and $\alpha$ is of class $\mathcal K,$ $x(t)$ converges to $x_\rmc.$
\end{proof}

\subsection{Control design using quadratic programming}
The modified CBF and CLF conditions are affine in $u,$ and $\mathcal U$ is a polytope.
The control input is thus obtained at each time $t$ from the quadratic program
\begin{subequations}\label{eq:delta QP}
\begin{align}
    \min_{u\in \mathcal U} \quad & u^\rmT u \\
    \text{s.t.} \quad & L_fh(x) + L_gh(x)u + L_\phi h(x)\hat \theta \nonumber\\
    & \qquad +\gamma\operatorname{sgn}(h(x))|h(x)|^\rho \geq \delta_h, \\
    & L_fV(x) + L_gV(x)u + L_\phi V(x)\hat \theta \leq -\lambda V(x) - \delta_V,
\end{align}
\end{subequations}
where $x=x(t),$ $\hat\theta=\hat\theta(t),$ and $\delta_h$ and $\delta_V$ are given by \eqref{eq: delta CBF} and \eqref{eq: delta CLF}, respectively.
Both constraints are enforced as hard constraints, and the QP is not guaranteed to be feasible.
Feasibility is thus a hypothesis of the following result, which combines Theorems \ref{thm:safety with delta} and \ref{thm:convergence with delta}.

\begin{remark}
\label{rem:feasibility}
The set $\mathcal U$ enters Theorems \ref{thm:safety with delta} and \ref{thm:convergence with delta} only through the existence of a feasible $u,$ and the proofs hold for $\mathcal U=\BBR^m.$
The bound on $u$ is one cause of infeasibility, since the recovery term $\gamma|h|^\rho$ and the margins $\delta_h$ and $\delta_V,$ which are largest at initialization where $\beta=\beta_0,$ increase the required control authority.
Enlarging $\mathcal U$ thus enlarges the set of states at which the QP is feasible.
Without a bound on $u,$ the QP is infeasible at a state $x$ only if $L_gh(x)=0$ or $L_gV(x)=0$ and the drift terms do not satisfy the corresponding constraint, or if $L_gh(x)$ and $L_gV(x)$ are positively proportional and their right-hand sides are incompatible.
\end{remark}

\begin{corollary}
    Under the conditions of Corollary \ref{cor:beta}, if the QP \eqref{eq:delta QP} is feasible for all $x\in \mathcal D$ and the solution $u$ of the QP is continuous, then the following holds for the resulting closed-loop solution $x(\cdot)$ of \eqref{eq:uncertain-system}:
    \begin{enumerate}
        \item For all $x(0)\in \mathcal D,$ $V(x(t))$ converges to $0$ exponentially and $x(t)$ converges to $x_\rmc;$
        \item For all $x(0)\in \mathcal C,$ $x(t)\in \mathcal C$ for all $t\geq 0;$ and
        \item For all $x(0)\notin \mathcal C,$ there exists $T\leq |h(x(0))|^{1-\rho}/(\gamma(1-\rho))$ such that $x(t)\in \mathcal C$ for all $t\geq T.$
    \end{enumerate}
\end{corollary}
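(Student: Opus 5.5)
The plan is to reduce the corollary to Theorems \ref{thm:safety with delta} and \ref{thm:convergence with delta}. The only work is to show that the QP-generated input is an admissible control signal along a well-defined closed-loop solution defined for all $t\geq0$.

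\textbf{Step 1: well-posedness of the closed loop.} Augment the state with the estimator and bound signals: $\hat\theta$ from \eqref{eq:gradient-law}, the filter states realizing $x_\rmf$ and $\Phi_\rmf$, $b$ from \eqref{eq:beta-ode}, and $\overline\Phi,\overline X$ from \eqref{eq:acc-phi}--\eqref{eq:acc-x}. The signal $\beta$ is then obtained as a running minimum of computable quantities.

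Since the QP solution $u$ is assumed continuous and $f,g,\phi$ are locally Lipschitz, the right-hand side is continuous. Peano's theorem therefore gives a local solution. Note that $\beta$ enters only through $\delta_h,\delta_V$, which affect $u$ but not the dynamics directly.

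Global existence must also be argued. First, $\mathcal U$ is compact, so $u$ is bounded. Second, Proposition \ref{prop:error-bound} keeps $\tilde\theta$, and hence $\hat\theta$, bounded. Third, once the CLF condition holds, $V(x(t))\leq V(x(0))$, so $x(t)$ stays in the sublevel set $\{\alpha(\|x-x_\rmc\|)\leq V(x(0))\}$, which is bounded. The filter states are driven by bounded inputs through stable filters and so remain bounded.

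This gives a standard continuation argument. I would state it with the caveat that the solution must remain in $\mathcal D$, which I take as implicit in assuming feasibility on $\mathcal D$.

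\textbf{Step 2: invoke the theorems.} Feasibility for all $x\in\mathcal D$ means that the minimizer $u(t)$ satisfies both constraints at each $t\geq0$ along the solution. Hence \eqref{eq: cbf delta cond} and \eqref{eq: clf delta cond} hold with $u(t)\in\mathcal U$. Corollary \ref{cor:beta} applies because its hypotheses (Assumption \ref{assum:initial-bound}) are assumed.

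Theorem \ref{thm:convergence with delta} then yields item 1. Theorem \ref{thm:safety with delta} yields item 2 and reachability of $\mathcal C$ at some $T$ bounded by \eqref{eq: recovery time}.

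\textbf{Step 3: invariance after entry.} For item 3, forward invariance must be applied from time $T$ onward. The argument in Theorem \ref{thm:safety with delta} uses only the differential inequality $\dot h\geq-\gamma\operatorname{sgn}(h)|h|^\rho$, which holds for all $t\geq0$. The same last-exit-time contradiction, started at $t_0\geq T$ with $h(x(T))\geq0$ instead of at time $0$, therefore shows $x(t)\in\mathcal C$ for all $t\geq T$.

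\textbf{Main obstacle.} The main obstacle is Step 1: continuity of the QP solution is assumed, but uniqueness of solutions and $\dot h$ being defined everywhere need care. The bound $\beta$ may be discontinuous because of the running infimum. I would handle this by noting that $\beta$ only tightens, so $u$ may jump but $x$ stays absolutely continuous. The inequalities in both theorems then hold almost everywhere, which suffices for the comparison arguments on $h$ and $V$.
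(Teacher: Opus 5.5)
Your proposal is correct and matches the paper's route: the paper obtains this corollary simply by combining Theorems~\ref{thm:safety with delta} and~\ref{thm:convergence with delta} under the feasibility hypothesis, with invariance after the entry time supplied by the same last-exit argument you reuse in Step~3. Your Step~1 on global existence of the closed-loop solution goes beyond what the paper argues; if you keep it, note that the sublevel set $\{x\colon\alpha(\|x-x_\rmc\|)\leq V(x(0))\}$ is guaranteed bounded only if $\alpha$ is of class $\mathcal K_\infty$ (or $V(x(0))<\sup\alpha$), whereas the paper assumes only class $\mathcal K$.
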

The bound on $T$ in item 3 depends on $x(0),$ $\gamma,$ and $\rho$ only, and thus does not depend on $\theta^\star$ or $\beta.$

\section{Numerical Simulation}
\label{sec:numerical}

This section applies the proposed controller to a constructed unstable system in which $f,$ $g,$ and $\phi$ are state dependent.
The adaptive controller is compared with an oracle controller that uses $\theta^\star$ and a naive controller that uses an incorrect constant value of the parameter.
One initial condition outside $\mathcal C$ tests finite-time recovery, and one inside $\mathcal C$ tests forward invariance.

\subsection{Setup}
\label{sec:num-setup}
Consider the nonlinear system of the form \eqref{eq:uncertain-system}, where
\begin{gather}
    f(x) = \begin{bmatrix} -x_2 \\ x_1 \end{bmatrix},
    \quad
    g(x) = (1+0.1x_1^2)I_2,
    \\
    \phi(x) = \begin{bmatrix} (1-x_1^2)x_2 \\ 0 \end{bmatrix},
    \label{eq:num-system}
\end{gather}
with $\theta^\star = 2.$
The objective is to stabilize $x$ to a goal point $x_\rmc\in\BBR^2,$ to ensure that $x$ reaches the safe set $\mathcal C$ in finite time, and to ensure that $\mathcal C$ remains forward invariant thereafter.
In this example, we assume that the safe set is $\mathcal C = \{x : h(x) \geq 0\}$ with $h(x) = r^2 - \|x\|^2$ and $r=1.5,$
and $\mathcal U = \{u\in\BBR^2 : \|u\|_\infty \leq 3\}.$
The CLF is $V(x) = \|x-x_\rmc\|^2$ for the goal point $x_\rmc.$

The choice of $x_\rmc$ affects both the estimator and the closed-loop equilibrium.
If $x_\rmc=0,$ then $x(t)$ converges to a state at which $\phi(x)=0,$ and thus the regressor $\Phi_\rmf \to 0$.
Consequently, $\beta$ stops contracting once the trajectory settles.
Choosing $x_\rmc$ such that $\phi(x_\rmc)\neq0$ instead keeps the regressor persistently nonzero at steady state and allows \eqref{eq:beta} to contract with data.
%

%
The estimator parameters are $p_\rmf = 5,$ $\Gamma = 5,$ $\mu = 0.5,$ and $\hat\theta(0)=0$ with $\beta_0 = 2 = |\theta^\star-\hat\theta(0)|,$ so Assumption \ref{assum:initial-bound} holds with equality.
The CBF/CLF parameters are $\gamma=1,$ $\rho=0.5,$ and $\lambda=1.$
The goal point is $x_\rmc = (0,1),$ at which $\phi(x_\rmc) = 1 \neq 0.$


We compare three controllers, each obtained from the QP \eqref{eq:delta QP} with a different choice of $\theta_\mathrm{used}$ and margins.
The \textit{oracle} controller uses $\theta_\mathrm{used}=\theta^\star$ with $\delta_h=\delta_V=0.$
The \textit{naive} controller uses a constant estimate $\theta_\mathrm{used}=\hat\theta(0)=0$ with $\delta_h=\delta_V=0,$ and thus lies outside the hypotheses of Theorems \ref{thm:safety with delta} and \ref{thm:convergence with delta}.
The \textit{adaptive} controller uses $\theta_\mathrm{used}=\hat\theta(t)$ together with $\delta_h(t) = |L_\phi h(x)|\beta(t)$ and $\delta_V(t) = |L_\phi V(x)|\beta(t)$ as in \eqref{eq: delta CBF} and \eqref{eq: delta CLF}.
The QP is feasible whenever $g(x)$ retains rank two on the region visited by the trajectory, since the CBF and CLF rows are then two generically compatible half-planes in $\BBR^2$ rather than two competing bounds on a single scalar input.

In all figures, blue denotes the oracle controller, red the naive controller, and yellow the adaptive controller.
In the diagnostic figures, $u_1$ is shown solid and $u_2$ dashed.

\subsection{Finite-Time Recovery}
The initial condition is $x(0) = (1.8,-0.6),$ outside the safe set, with $h(x(0)) = -1.35.$
Figure \ref{fig:fig_recovery_diagnostics} shows $h,$ $V,$ the bound $\beta,$ the parameter error $|\tilde \theta|$ for the adaptive controller, and the input $u.$
Figure \ref{fig:fig_phase_portrait} shows the phase-plane trajectories.


\begin{figure}[ht]
\centering
\includegraphics[width=\linewidth]{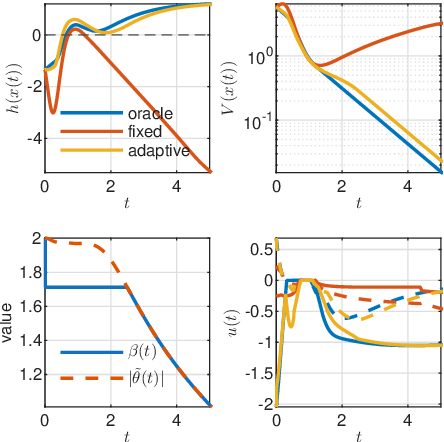}
\caption{Barrier value $h$, Lyapunov value ($V$ on a log scale), estimator diagnostics ($\beta(t)$ and $|\tilde \theta|$), and control input $u.$.}
\label{fig:fig_recovery_diagnostics}
\end{figure}

\begin{figure}[ht]
\centering
\includegraphics[width=\linewidth]{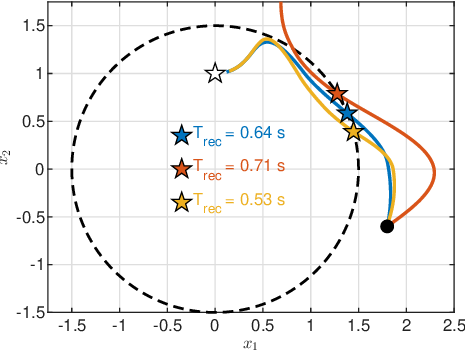}
\caption{Phase-plane trajectories, with the safe set boundary, initial condition $x(0),$ CLF goal $x_\rmc,$ and recovery points marked with a star; the recovery time at each marker is given in matching color.}
\label{fig:fig_phase_portrait}
\end{figure}


\begin{table}[ht]
\centering
\caption{Recovery time, recovery-time bound, and terminal values, $T=5\,\mathrm s.$}
\label{tab:num-example}
\begin{tabular}{lcccc}
\hline
 & $T_\mathrm{rec}$ (s) & $T_\mathrm{bound}$ (s) & $h(T)$ & $V(T)$ \\
\hline
oracle & 0.636 & 2.324 & 1.202 & 0.0153 \\
naive, no margin & 0.707$^\ast$ & --- & $-5.320$ & 3.2385 \\
adaptive, margin & 0.528 & 2.324 & 1.178 & 0.0228 \\
\hline
\end{tabular}
\\[2pt]
\footnotesize{$T_\mathrm{bound}$ is given by \eqref{eq: recovery time} and does not apply to the naive controller.
$^\ast$Crosses $h=0$ transiently; does not remain in $\mathcal C.$}
\end{table}

Table \ref{tab:num-example} summarizes the results.
The oracle and adaptive controllers enter $\mathcal C$ below the bound \eqref{eq: recovery time} and satisfy $h(t)\geq0$ afterwards, and both drive $V$ toward zero.
The naive controller crosses $h=0$ once and then leaves $\mathcal C,$ and $V$ plateaus near $3.2$ instead of converging to zero.
This is consistent with the naive controller not satisfying the modified condition \eqref{eq: cbf delta cond}.

For the adaptive controller, $\beta$ is nonincreasing throughout, and the accumulated-regressor term in \eqref{eq:beta} reduces $\beta$ below $\beta_\rmp.$
%
%
The input satisfies $\|u\|_\infty\leq3$ for all three controllers.

\subsection{Forward Invariance}

The initial condition is $x(0)=(0.5,-1.2),$ inside the safe set, with $h(x(0))=0.560.$
All other parameters are unchanged.
Figures \ref{fig:fig_recovery_diagnostics_FI} and \ref{fig:fig_phase_portrait_FI} show the same quantities as in the previous subsection.

\begin{figure}[ht]
\centering
\includegraphics[width=\linewidth]{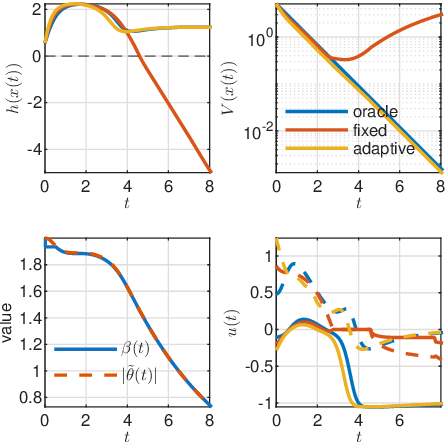}
\caption{Barrier value $h$, Lyapunov value ($V$ on a log scale), estimator diagnostics ($\beta(t)$ and $|\tilde \theta|$), and control input $u.$}
\label{fig:fig_recovery_diagnostics_FI}
\end{figure}

\begin{figure}[ht]
\centering
\includegraphics[width=\linewidth]{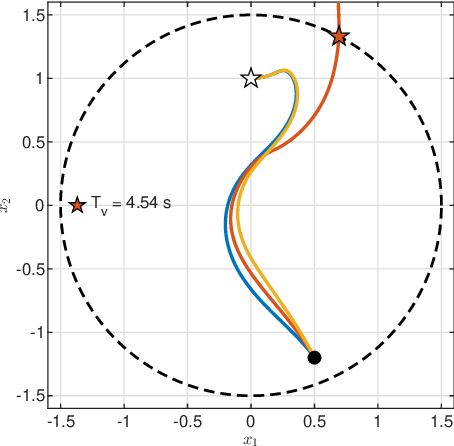}
\caption{Phase-plane trajectories for $x(0)=(0.5,-1.2)$ with the safe set boundary, initial condition $x(0),$ and goal $x_\rmc.$
The star marks the point at which the naive controller leaves $\mathcal C,$ and the violation time is given in matching color.}

\label{fig:fig_phase_portrait_FI}
\end{figure}


\begin{table}[ht]
\centering
\caption{Violation time and terminal values, $T=8\,\mathrm s,$ $x(0)=(0.5,-1.2).$}
\label{tab:num-example-fwdinv}
\begin{tabular}{lccc}
\hline
 & $T_\rmv$ (s) & $h(T)$ & $V(T)$ \\
\hline
oracle & --- & 1.245 & 0.0017 \\
naive, no margin & 4.54 & $-4.999$ & 3.0402 \\
adaptive, margin & --- & 1.246 & 0.0013 \\
\hline
\end{tabular}
\end{table}

Table \ref{tab:num-example-fwdinv} summarizes the results.
The oracle and adaptive controllers remain in $\mathcal C$ over the horizon and drive $V$ toward zero, consistent with Theorems \ref{thm:safety with delta} and \ref{thm:convergence with delta}.
The naive controller leaves $\mathcal C$ at $t=4.54\,\mathrm s$ although $x(0)\in\mathcal C.$
Omitting the margin thus costs forward invariance, in addition to finite-time recovery.
%


\section{Conclusions}
\label{sec:conclusions}

This paper presented a controller that solves the reach-stabilize problem for control-affine systems with an unknown constant parameter vector multiplying a known regressor matrix.
A gradient estimator with a filtered regressor generates an estimate $\hat\theta,$ and a bound $\beta$ on the estimation error is computed online from a known bound on the initial error.
The bound is nonincreasing, and an accumulated regressor tightens it under interval excitation without requiring persistence of excitation.
Margins proportional to $\beta$ are added to the CBF and CLF conditions, and both conditions are enforced in a single QP under the same estimate.
The closed loop remains in the safe set from an initial state inside it, enters the safe set in finite time from an initial state outside it, and converges to the goal point.
The recovery-time bound equals that of the known-parameter design and does not depend on the unknown parameter.
In the numerical examples, the controller that ignores the estimation error leaves the safe set from initial conditions both outside and inside it, while the proposed controller remains in the set.

Future work is focused on time-varying safe sets.


\bibliographystyle{ieeetr}
\bibliography{bibliography}

\end{document}